\def\w{\omega}
\def\x{\boldsymbol{x}}
\def\Hil{\mathcal H}
\def\R{\mathbf{R}}
\def\eps{\varepsilon}
\newcommand{\abs}[1]{\left\lvert #1\right\rvert}
\newtheorem{theorem}{Theorem}[section]
\newtheorem{proposition}[theorem]{Proposition}
\begin{document}

\title{On the Hilbert transform of wavelets}

\author{Kunal~Narayan~Chaudhury\thanks{Correspondence: Kunal~N.~Chaudhury (kchaudhu@princeton.edu). The second author is with the Biomedical Imaging Group, \'Ecole Polytechnique Federale de Lausanne, 
Switzerland; Fax: +41 216933701. This work was supported in part by the Swiss National Science Foundation under grant 200020-109415.} \ and Michael~Unser}

\maketitle

\begin{abstract}

	A wavelet is a localized function having a prescribed number of vanishing moments. In this correspondence, we provide precise arguments as to why the Hilbert transform of a wavelet is again a wavelet. In particular, we provide sharp estimates of the localization, vanishing moments, and smoothness of the transformed wavelet.  We work in the general setting of non-compactly supported wavelets. Our main result is that, in the presence of some minimal smoothness and decay, the Hilbert transform of a wavelet is again as smooth and oscillating as the original wavelet, whereas its localization is controlled by the number of vanishing moments of the original wavelet. We motivate our results using concrete examples.
	
\end{abstract}

\textbf{Keywords}:
Hilbert transform, wavelets, localization, vanishing moments.

\section{Introduction}

	It is known that the poor translation-invariance of standard wavelet bases can be improved by considering a pair of wavelet bases, whose mother wavelets are related through the Hilbert transform \cite{kingsbury1,kingsbury2,CTDWT,Pasquet}. The advantages of using Hilbert wavelet pairs for signal analysis had also been recognized by other authors \cite{Flandrin,Daubechis}. More recently, it was shown in \cite{Chaudhury2009} how a Gabor-like wavelet transform could be realized using such Hilbert pairs.

	The fundamental reasons why the Hilbert transform can be seamlessly integrated into the multiresolution framework of wavelets are its scale and translation invariances, and its energy-preserving (unitary) nature \cite{Chaudhury2009}. These properties are at once obvious from the Fourier-domain definition of the transform. We recall that the Hilbert transform $\Hil f(x)$ of a sufficiently well-behaved function $f(x)$ is specified by
\begin{equation}
\label{freq-def}
\widehat{\Hil f}(\w)=
\begin{cases} -j \hat f(\w)     & \text{ for } \w >0 \\
+j \hat f(\w)    & \text{ for } \w <0 \\
0    & \text{ at } \w=0. \\
\end{cases}
\end{equation}
On one hand, the unitary nature ensures that the Hilbert transform of a (wavelet) basis of $\mathrm{L}^2(\R)$ is again a basis of $\mathrm{L}^2(\R)$. On the other hand, the invariances of scale and translation together provides coherence---the Hilbert transform of a wavelet basis generated from the mother wavelet $\psi(x)$ is simply the wavelet basis generated from the mother wavelet $\Hil \psi(x)$.

	The flip side, however, is that the transform is incompatible with scaling functions (low-pass functions in general), the building blocks of multiresolution analyses. As shown in Figure \ref{HilbertLowpass}, the transform  ``breaks-up'' scaling functions, resulting in the loss of their crucial approximation property. Moreover, the transformed function exhibits a slow decay. Starting from a given multiresolution with associated wavelet basis $(\psi_n)_{n \in \mathbf Z}$, this presents conceptual difficulties in realizing a dual multiresolution with basis $(\Hil \psi_n)_{n \in \mathbf Z}$. It was shown in \cite{Chaudhury2009} that this pathology can, however, be overcome by a careful design of the dual multiresolution in which the Hilbert transform is applied only on the wavelet, and never explicitly on the scaling function. 
	
	The above-mentioned pathologies can be explained by considering the space-domain definition of the transform, which is slightly more involved mathematically (see, e.g., \cite{Stein,Javier}):
\begin{equation}
\label{def}
\Hil f(x)=\frac{1}{\pi} \lim \limits_{\eps \rightarrow 0} \int_{|t| > \eps} \!\!\! f(x-t) \ \frac{dt}{t}.
\end{equation}
Disregarding the technicalities involving the use of truncations and limits, $\Hil f(x)$ is thus essentially given by the convolution of $f(x)$ with the kernel $1/\pi x$ (cf. Figure \ref{SingularKernel}). It is now readily seen that the above-mentioned observations follow as a consequence of the ``oscillating'' form of the kernel and its slow decay at the tails. We will conveniently switch between definitions \eqref{freq-def} and \eqref{def} in the sequel.

\begin{figure*} 
\centering 
\includegraphics[width=1.0\linewidth]{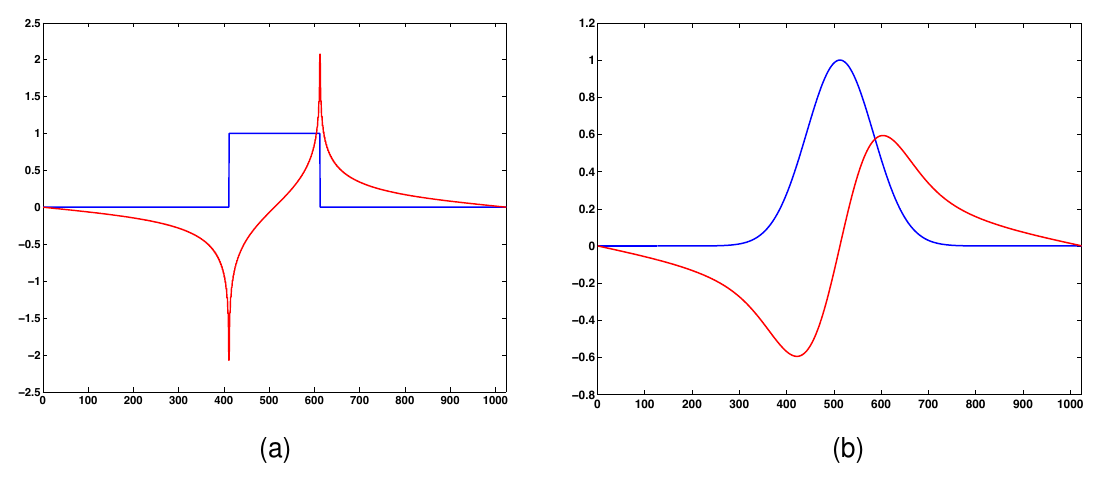}
\caption{Scaling functions and their Hilbert transforms: $(a)$ The discontinuous Haar scaling function (BLUE) and its transform (RED), $(b)$ The smooth cubic B-spline (BLUE) and its transform (RED). In either case, the  transformed function is ``broken-up'' and, as a consequence, loses its approximation property. In particular, the transform no longer exhibits the partition-of-unity property, which is characteristic of scaling functions. Also, note the slow decay of the transform, particular for the smooth spline function. In fact, both the transforms decay as $1/|x|$---the smoothness of the original function has no effect on the decay of the transform.} 
\label{HilbertLowpass} 
\end{figure*}

	Our main observation is that the Hilbert transform goes well with oscillatory patterns, and wavelets in particular. The archetypal relation in this regard is its action on pure sinusoids,
\begin{equation*}
\Hil [\cos(\w_0 x)]=\sin(\w_0 x).
\end{equation*}
Thus, the transform tends to preserve oscillations. The nature of the interaction with localized oscillations is suggested by the relation
\begin{equation}
\label{modulation}
\Hil [ \varphi(x) \cos(\w_0x)] = \varphi(x) \sin(\w_0 x)
\end{equation}
which holds if the localization window $\varphi(x)$ is bandlimited to $(-\w_0, \w_0)$ \cite{Bedrosian}. This is an immediate consequence of definition \eqref{freq-def}. The crucial observation, however, is that the transformed signal is again smooth (in fact, infinitely differentiable) and oscillatory, and importantly, has the same localization  as the input signal. It is known that a particular family of spline wavelets, namely, the B-spline wavelets \cite{convergence}, converge to a function of the form  $\varphi(x) \cos(\w_0x+\phi)$ with the increase in the order of the spline. In particular, it was shown in \cite{Chaudhury2009} that the Hilbert transform has comparable  localization, smoothness, and vanishing moments for sufficiently large orders (cf. Figure \ref{HilbertWavelets}). It was also shown that the transformed wavelet in fact approaches $\varphi(x) \sin(\w_0 x+\phi)$ as the order increases, which is consistent with \eqref{modulation}. Since, more generally, wavelets with sufficient smoothness and vanishing moments can be made to closely approximate the form in \eqref{modulation}, we could in fact arrive at a similar conclusion for a larger class of wavelets. 

Using these instances as guidelines, we attempt to answer the following basic questions in the sequel:
\begin{itemize}
\item When is the Hilbert transform of a wavelet well-defined? In particular, how much smoothness and decay is required?
\item Why does the Hilbert transform of a wavelet exhibit better decay than the corresponding scaling function? How does one really get past the $1/|x|$ decay?
\item How good is the localization of the transformed wavelet, how smooth is it, and how many vanishing moments does it have? 
\end{itemize}

\section{Notations}

	The Fourier transform of $f(\x)$ is defined by $\hat{f}(\w)=\int_{-\infty}^{\infty} f(x) \exp{(-j \w x)} \ dx$. We omit the domain of integration when this is obvious from the context.  We define $\lVert f \rVert_1=\int_{-\infty}^{\infty} |f(x)| \ dx$, and $\lVert f \rVert_{\infty}=\sup \{ |f(x)|: x \in \R\}$. The notation $T_x f(t)$ denotes the function $T_x f(t)=f(x-t)$. We write $f(x)=O(g(x)), x \in A,$ to signify that $|f(x)| \leq C g(x)$ for all $x \in A$, where $C$ is an absolute constant. We denote the first derivative of $f(x)$ by $f'(x)$; in general, we denote the $k$-th derivative by $f^{(k)}(x)$. We say that $f(x)$ is $n$-times continuously differentiable if all its derivatives up to order $n$ exists and are continuous.

\begin{figure*} 
\centering 
\includegraphics[width=1.0\linewidth]{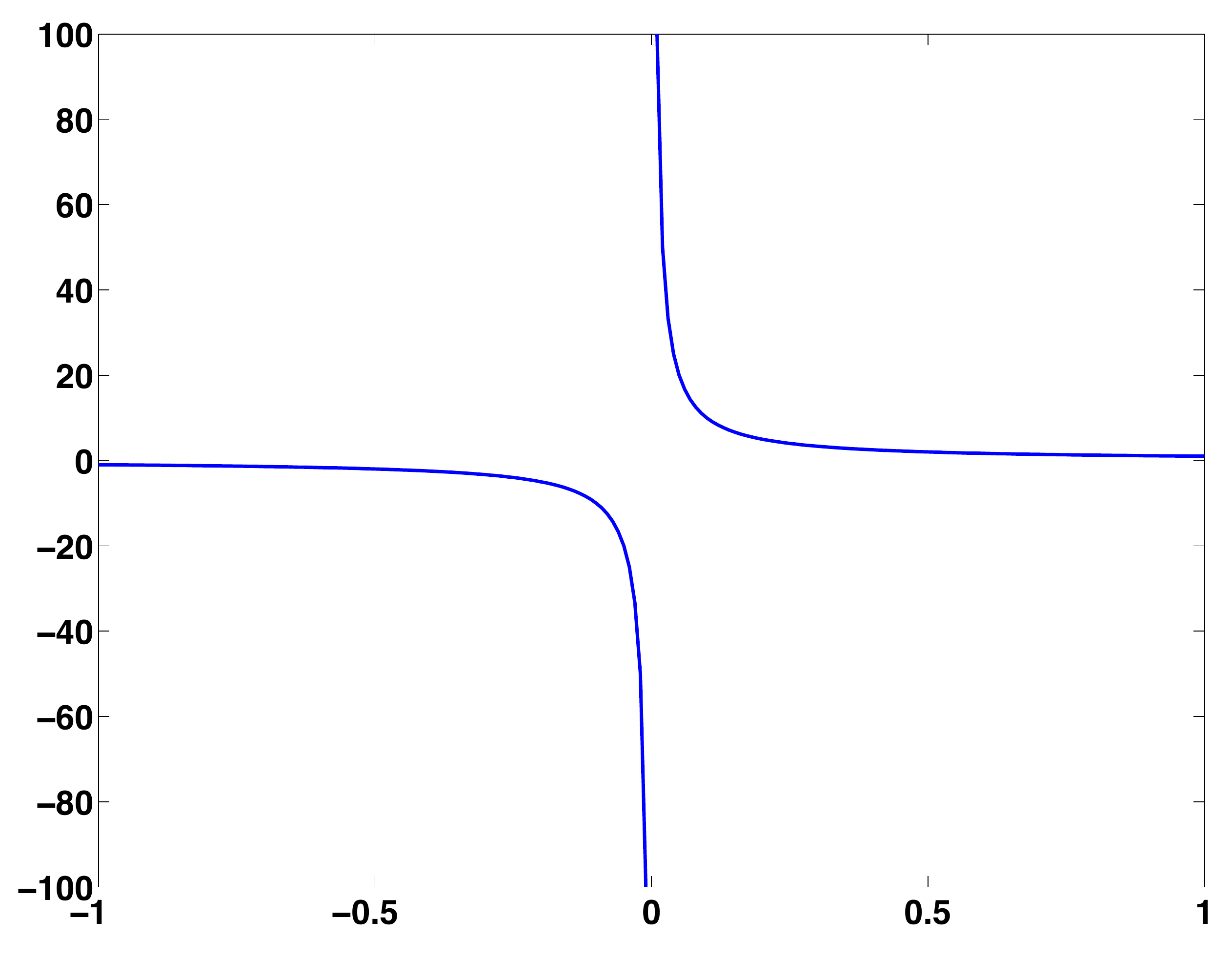}
\caption{The convolution kernel of the Hillbert transform, $1/\pi x$. It has a singularity at the origin and its tails decay slowly. The former pathology can be overcome provided that the signal on which the transform is applied is sufficiently smooth, while the slow decay can be overcome if the signal is of compact support, or at least, of sufficient decay.} 
\label{SingularKernel} 
\end{figure*}

\section{Main results}
	
	The kernel $1/\pi x$ fails to be absolutely integrable owing to its slow decay and, more importantly, its singularity at the origin. The limiting argument in \eqref{def} avoids the singularity by truncating the kernel around the origin in a systematic fashion. The slow decay of the kernel, on the other hand, can be dealt with by simply restricting the domain of \eqref{def} to functions with sufficient decay. 
		
	As noted in the introduction, the Hilbert transform goes well only with smooth functions. This can be readily appreciated by looking at the transform of the discontinuous Haar wavelet in Figure \ref{HilbertWavelets}. In this case, the transform ``blows-up'' in the vicinity of the discontinuities, and is, in fact, not even well-defined at the points of discontinuity. The following result, which relies on some classical methods of harmonic analysis, explains how this problem can be fixed. 
	
	For convenience, we introduce the mixed norm $\lVert  f \rVert_{1,\infty}=\lVert f \lVert_1+ \lVert f' \lVert_{\infty}$ which measures both the local smoothness and the global size of $f(x)$.

\begin{theorem}[The classical result]
\label{decay}
Let $f(x)$ be a differentiable function such that both $ \|f\|_{1,\infty}$ and $ \| x f(\cdot)\|_{1,\infty}$  are finite. Then $\Hil f(x)$ is well-defined, and 
\begin{equation}
\label{bound}
| \Hil f(x) | \leq \frac{C}{1+|x|} \left(\|f\|_{1,\infty} + \| x f(\cdot)\|_{1,\infty} \right )=O(|x|^{-1}).
\end{equation}
\end{theorem}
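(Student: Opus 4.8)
The plan is to prove the pointwise bound in \eqref{bound} by splitting the principal-value integral into a local part near the singularity and a tail part, handling each with a different device, and then carefully tracking the dependence on $|x|$ to extract the $1/(1+|x|)$ decay. Throughout I would work from the space-domain definition \eqref{def}, writing $\Hil f(x)=\tfrac1\pi \lim_{\eps\to 0}\int_{|t|>\eps} f(x-t)\,dt/t$.

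First I would dispose of the singularity at $t=0$. The oddness of the kernel $1/t$ lets me subtract off the value $f(x)$ over a symmetric window: for $|t|<1$, write $f(x-t)=f(x)-t f'(\xi)$ for some $\xi$ (mean value theorem), so that $\int_{\eps<|t|<1} f(x-t)\,dt/t = \int_{\eps<|t|<1}\bigl(f(x-t)-f(x)\bigr)\,dt/t$ because $\int_{\eps<|t|<1} dt/t=0$. The integrand $\bigl(f(x-t)-f(x)\bigr)/t$ is then bounded in absolute value by $\|f'\|_\infty$, which is finite since $\|f\|_{1,\infty}<\infty$. Hence the limit as $\eps\to0$ exists and the local piece is bounded by $2\|f'\|_\infty$. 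This both establishes that $\Hil f(x)$ is well-defined and gives a uniform (i.e.\ $|x|$-independent) bound on the near part; I would keep it in reserve for the regime $|x|\le 1$, where $1/(1+|x|)$ is bounded below by a constant and so a uniform estimate already suffices.

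Next comes the tail part $\int_{|t|>1} f(x-t)\,dt/t$, which is an honest absolutely convergent integral since $1/t$ is bounded there and $f\in\mathrm L^1$. The substitution $u=x-t$ turns this into $\int f(u)\,du/(x-u)$ over $|x-u|>1$, and the task is to show this is $O(|x|^{-1})$ for large $|x|$. The key leverage is the hypothesis that $\|xf(\cdot)\|_1<\infty$, i.e.\ $f$ has a finite first moment, which forces extra decay. I would split the range of $u$ at, say, $|u|\le |x|/2$ and $|u|>|x|/2$: on the first region $|x-u|\ge |x|/2$, so the kernel contributes a factor $2/|x|$ and the integral is controlled by $(2/|x|)\|f\|_1$; on the second region I use $|u|>|x|/2$ together with $\int_{|u|>|x|/2}|f(u)|\,du \le (2/|x|)\int |u|\,|f(u)|\,du = (2/|x|)\|uf(\cdot)\|_1$ (and the kernel is at least bounded because $|x-u|>1$). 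Combining, the tail is $O(|x|^{-1})$ with constant controlled by $\|f\|_1+\|uf(\cdot)\|_1$. Assembling the two regimes $|x|\le1$ and $|x|>1$ and absorbing constants yields the claimed bound with the norm $\|f\|_{1,\infty}+\|xf(\cdot)\|_{1,\infty}$.

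The main obstacle I anticipate is the interplay between the two truncations: the window radius used to kill the singularity is fixed at $1$, but for the decay estimate I need the \emph{tail} splitting point to scale with $|x|$, and one must check that the crossover between ``use smoothness'' and ``use the moment bound'' is handled without double-counting. A secondary subtlety is justifying that the principal-value limit and the region splitting commute, and confirming that the differentiability hypothesis $\|xf(\cdot)\|_{1,\infty}<\infty$ (not merely $\|xf(\cdot)\|_1$) is what is actually needed---the derivative term $\|(xf)'\|_\infty$ presumably enters when bounding the near part of the shifted integral for moderate $|x|$, where one cannot simply use $|x-u|\ge|x|/2$. Getting the bookkeeping of constants clean enough to land exactly on the stated mixed-norm expression is the part I expect to require the most care.
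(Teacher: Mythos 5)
Your well-definedness argument and the uniform bound (mean value theorem after subtracting $f(x)$ over a symmetric window near the singularity, plus the trivial bound on the tail) are correct and coincide with the paper's. Your tail estimate is also sound: after substituting $u=x-t$, splitting $\int_{|x-u|>1} f(u)\,du/(x-u)$ at $|u|=|x|/2$ and spending the first moment $\|uf(\cdot)\|_1$ on the outer region does give $O(|x|^{-1})$ for that piece. The genuine gap is the near piece $\int_{\eps<|t|<1} f(x-t)\,dt/t$ when $|x|$ is large: the only bound you establish for it is the uniform one, $2\|f'\|_\infty$, which does not decay in $|x|$. As assembled, your estimate for $|x|>1$ therefore reads $2\|f'\|_\infty+O(|x|^{-1})=O(1)$, not $O(|x|^{-1})$, so \eqref{bound} does not follow. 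You flag the issue yourself (``the derivative term $\|(xf)'\|_\infty$ presumably enters when bounding the near part''), but no argument is given, and this region is precisely where the whole difficulty lives: there $|u|\approx|x|$, so the kernel $1/(x-u)$ contributes no smallness, and the moment bound is useless because the kernel is singular; only the smoothness of $xf(x)$ can help.

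The missing step is exactly the paper's key device, and it closes your gap in two lines. Set $g(u)=uf(u)$ and use the identity $x\bigl(f(x-t)-f(x)\bigr)=\bigl(g(x-t)-g(x)\bigr)+tf(x-t)$, whence
\begin{equation*}
\Bigl|x\int_{\eps<|t|<1}\bigl(f(x-t)-f(x)\bigr)\,\frac{dt}{t}\Bigr|
\leq 2\lVert g'\rVert_\infty+\int_{|t|<1}\lvert f(x-t)\rvert\,dt
\leq 2\lVert (xf)'\rVert_\infty+\lVert f\rVert_1,
\end{equation*}
the first term by the same mean-value-theorem argument you already used for $f$, now applied to $g$; this is where the hypothesis $\|xf(\cdot)\|_{1,\infty}<\infty$ (rather than merely $\|xf(\cdot)\|_1<\infty$) is actually consumed. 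The paper makes this move globally rather than locally: it proves the uniform bound $|W(h)|\leq\pi^{-1}(2\|h'\|_\infty+\|h\|_1)$ once, for a general admissible $h$, and then reduces decay to it via the identity $x\Hil f(x)=\Hil g(x)+\pi^{-1}\int f(t)\,dt$, which avoids your two-region bookkeeping for the tail altogether. Your region splitting is a workable substitute for the far region, but without the identity above (or an equivalent use of the smoothness of $xf$) the $1/(1+|x|)$ decay cannot be reached.
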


	In particular, this holds true if $f(x)$ is continuously differentiable and is of compact support.

\begin{proof}
Consider the basic quantity
\begin{equation}
\label{functional}
W(f) = \frac{1}{\pi} \lim_{\eps \rightarrow 0} \int_{|t| > \eps} f(t) \ \frac{dt}{t}.
\end{equation}
Note that $\Hil f(x)= W (T_x f)$. To begin with, we at least need to guarantee that $W(f)$ is well-defined. Note that the integrand in \eqref{functional} is the product of the bounded function $1/t$ (on $\abs{t} > \eps$) and the integrable function $f(t)$. Therefore, the integral is absolutely convergent for all $\eps >0$. All we need to show is that the integral remains convergent as $\eps \longrightarrow 0$.
To this end, we split the integral in \eqref{functional}, and use the anti-symmetry of $1/t$ to write
\begin{align*}
W(f)  & = \frac{1}{\pi} \lim_{\eps \rightarrow 0} \int_{\eps < |t| < 1} f(t) \ \frac{dt}{t} + \frac{1}{\pi} \int_{|t| \geq 1} f(t) \ \frac{dt}{t} \\
& = \frac{1}{\pi} \lim_{\eps \rightarrow 0} \int K(\eps,t)     \frac{f(t)-f(0)}{t} \ dt +\frac{1}{\pi} \int_{|t| \geq 1} f(t) \ \frac{dt}{t}.
\end{align*}
where $K(\eps,t)=1$ when $\eps < |t| <1$ and zero otherwise. Clearly, the second integral is convergent. As for the first, note that since $f'(x)$ is bounded, by the mean-value theorem, $K(\eps,t)   |(f(t)-f(0)/t| \leq  \lVert f' \lVert_{\infty}$ for $t$ and for all $\eps >0$. Therefore, by the dominated convergence theorem, 
\begin{equation*}
\lim_{\eps \rightarrow 0} \int K(\eps,t)    \Big|\frac{f(t)-f(0)}{t} \Big| \ dt \leq  2 \lVert f' \lVert_{\infty}.
\end{equation*}

	In particular, we conclude that $W(f)$ is well-defined, and 
\begin{align}
\label{E1}
|W(f)| \leq \frac{1}{\pi}  \left( 2 \lVert f' \lVert_{\infty} +  \lVert f \lVert_1 \right).
\end{align}
Since $T_x f(t)$ has the same decay and smoothness as $f(t)$, it is now immediate that $\Hil f(x)$ is well-defined (pointwise), and that
\begin{equation}
\label{bound1}
| \Hil f(x) | \leq \frac{1}{\pi}  \|T_x f\|_{1,\infty}= \frac{1}{\pi} \left( 2 \lVert f' \lVert_{\infty} +  \lVert f \lVert_1 \right).
\end{equation}

Next, we note that 
\begin{align*}
x \Hil f(x) & = \frac{1}{\pi} \lim_{\eps \rightarrow 0} \int_{|t| > \eps} (x-t) f(x-t) \ \frac{dt}{t} + \frac{1}{\pi} \int f(t) \ dt \\
& = \Hil g(x) +\frac{1}{\pi} \int f(t) \ dt 
\end{align*}
where $g(x)=xf(x)$. Since $\|g\|_{1,\infty}$ is finite, $\Hil g(x)$ is well-defined, and 
\begin{equation*}
| \Hil g(x) | \leq \frac{1}{\pi}   \left( 2 \lVert g' \lVert_{\infty} +  \lVert g \lVert_1 \right).
\end{equation*}
Therefore,
\begin{equation}
\label{bound2}
| x \Hil f(x) |   \leq  \frac{1}{\pi} \left( 2 \lVert g' \lVert_{\infty} +  \lVert g \lVert_1+ \|  f \|_1 \right).
\end{equation}
Combining \eqref{bound1} and \eqref{bound2}, we obtain \eqref{bound}.
\end{proof}

We note that the main conclusions of the theorem are well-known results in harmonic analysis; e.g., see \cite{Stein,Javier}. Moreover, the assumptions under which we reproduce these results in Theorem \ref{decay} are on the conservative side. In fact, as can already be seen from our derivation, the transform remains well-defined if we replace the constraint $\lVert f' \lVert_{\infty} < \infty$ by the weaker hypothesis of Lipschitz continuity, that is, if $|f(x)- f(y)| \leq C |x-y|$ for some absolute constant $C$. Our goal here was to introduce some mathematical tools which we eventually use to prove our main result.

\subsection{Vanishing moments and decay}

The derivation of the Theorem \ref{decay} exposes the unfortunate fact that the poor $1/|x|$ decay cannot be improved even if $f(x)$ is required to be more smooth (cf. transform of the cubic spline in Figure \ref{HilbertLowpass}), or have a better decay. However, it does suggest the following: If $\Hil g(x)$ goes to zero as $|x|$ goes to infinity (which is the case if $g(x)$ is sufficiently nice), then
\begin{equation*}
\lim_{|x| \longrightarrow \infty} x \Hil f(x) = \frac{1}{\pi} \int f(x) \ dx.
\end{equation*}
In particular, if $f(x)$ has zero mean, then $x \Hil f(x)$ goes to zero at infinity. Therefore, the decay of $\Hil f(x)$ must be better than $1/|x|$ in this case. This alludes to the connection between the zero-mean condition and the improvement in decay. To make this more precise, we consider the example of the Haar wavelet
\begin{equation*}
\psi(x)=
\begin{cases} +1 & \text{ for } -1 \leq x < 0 \\
-1 & \text{ for }  0 \leq x < 1.
\end{cases}
\end{equation*}
Let $\abs{x} >2$. Since $\psi(x)$ has zero mean, we can write
\begin{align*}
 \Hil \psi(x)  & =\frac{1}{\pi}  \lim_{\eps \rightarrow 0} \int_{|t-x| > \eps} \frac{\psi(t)}{x-t} \ dt  \\
 & =\frac{1}{\pi}  \lim_{\eps \rightarrow 0} \int_{|t-x| > \eps} \psi(t) \left(\frac{1}{x-t}-\frac{1}{x}\right) \ dt  \\
 & = \frac{1}{\pi} \lim_{\eps \rightarrow 0} \int_{|t-x| > \eps} \frac{t \psi(t) }{x(x-t)} \ dt.
\end{align*}
Now $\abs{x-t} \geq \abs{x}$/2 for $\abs{x}>2$, and $t \in [-1,1]$. Hence,
\begin{align*}
| \Hil \psi(x)|  \leq \frac{2}{\pi \abs{x}^2} \int_{-1}^1  |t  \psi(t) | \ dt \leq \frac{1}{\pi \abs{x}^2}.
\end{align*}
Thus, while the Hilbert transform of the Haar scaling function decays only as $1/\abs{x}$, the transform of the Haar wavelet has a better decay of $1/\abs{x}^2$. This is clearly seen by comparing the plots in Figures \ref{HilbertLowpass} and \ref{HilbertWavelets}. 

	We can now generalize the above observation by requiring that, for some $n \geq 1$,
\begin{equation*}
\int x^k \psi(x) \ dx=0 \qquad (0 \leq k < n).
\end{equation*}
This \textit{vanishing moment} property is in fact characteristic of wavelets, which are often parametrized by the number $n$  \cite{WTSP}. The following result explains how higher vanishing moments can contribute to the increase in the decay of the Hilbert transform. The main idea is that the kernel of the Hilbert transform effectively behaves as $1/\pi x^{n+1}$ in the presence of $n$ vanishing moments.

We use the augmented decay to compute the number of vanishing moments of the transformed wavelet.   

\begin{figure*}
\centering
\includegraphics[width=1.0\linewidth]{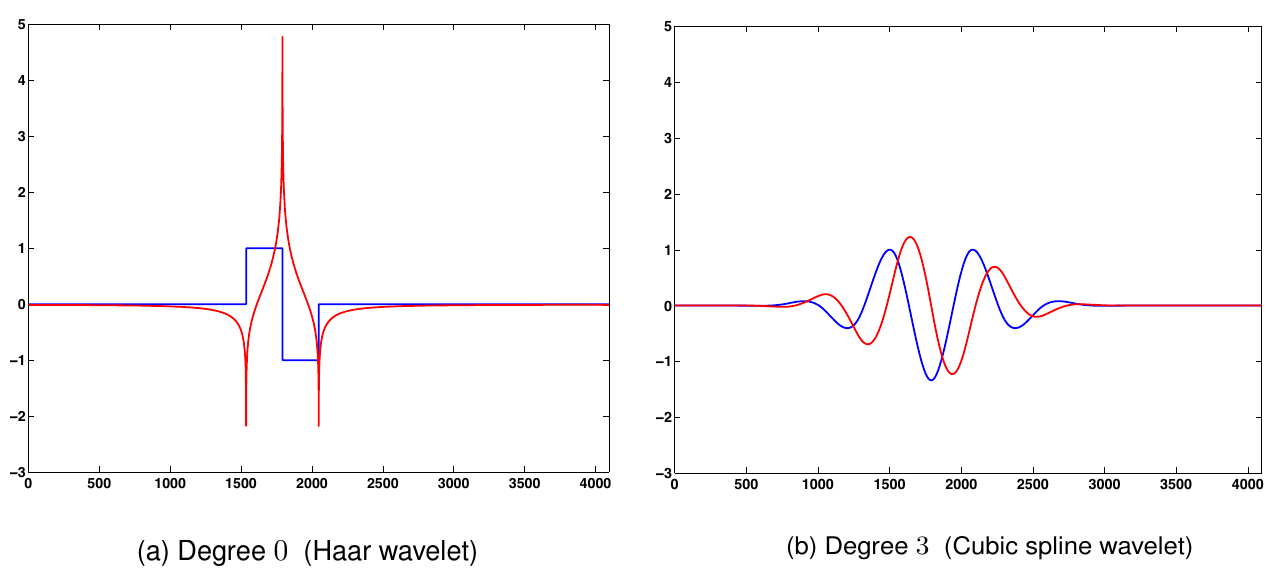} 
\caption{B-spline wavelets (shown in BLUE) and their Hilbert transforms (shown in RED). The wavelets are ordered (left to right) by increasing smoothness and vanishing moments; both are compactly supported. Notice how the decay of the Hilbert transform increases with the increase in vanishing moments---the transform of the cubic spline wavelet appears to have an almost identical localization. Moreover, it is as smooth as the original wavelet. It is shown in the text that, in the presence of some minimal smoothness, the Hilbert transform is as smooth and oscillating as the spline wavelet.}
\label{HilbertWavelets}
\end{figure*}

\begin{theorem}[Decay and vanishing moments]
\label{better-decay}
Let $\psi(x)$ be a differentiable wavelet having $n$ vanishing moments.  Also, assume that $ \|\psi \|_{1,\infty}, \| x^{n+1} \psi(\cdot)\|_{1,\infty}$, and $ \| x^n \psi(\cdot)\|_1$ are finite. Then $\Hil \psi(x)$ is well-defined, and 
\begin{equation}
\label{bound4}
| \Hil \psi(x) | \leq \frac{C}{1+|x|^{n+1}} \Big( \|\psi\|_{1,\infty} + \|x^{n+1} \psi(\cdot)\|_{1,\infty} + \| x^n \psi(\cdot)\|_1 \Big )=O(|x|^{-n-1}).
\end{equation}
Moreover, $\Hil \psi(x)$ has $n$ vanishing moments.
\end{theorem}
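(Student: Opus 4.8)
The plan is to establish the two assertions---the $O(\abs{x}^{-n-1})$ decay and the $n$ vanishing moments---separately, reusing the machinery built in the proof of Theorem~\ref{decay}. Well-definedness of $\Hil\psi(x)$ is immediate, since the estimate \eqref{bound1} there already follows from $\norm{\psi}_{1,\infty}<\infty$ alone. The decay is then obtained by making precise the heuristic stated in the text, that $n$ vanishing moments make the convolution kernel behave like $1/\pi x^{n+1}$; the vanishing moments of $\Hil\psi$ are afterwards read off from this improved decay by passing to the Fourier domain.

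For the decay I would first generalize the Haar computation. Fixing $x\neq 0$ and using $\int t^k\psi(t)\,dt=0$ for $0\le k<n$, I would subtract the truncated geometric series from the kernel inside the principal-value integral, exploiting the identity
\begin{equation*}
\frac{1}{x-t}-\sum_{k=0}^{n-1}\frac{t^k}{x^{k+1}}=\frac{t^n}{x^n(x-t)}.
\end{equation*}
Each subtracted term contributes $x^{-(k+1)}\int_{\abs{t-x}>\eps}t^k\psi(t)\,dt$, which tends to $x^{-(k+1)}\int t^k\psi=0$ as $\eps\to 0$ by dominated convergence (each $t^k\psi$ is in $L^1$ for $k\le n$, since $\abs{t}^k\le 1+\abs{t}^n$). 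This yields the identity $x^n\,\Hil\psi(x)=\Hil[t^n\psi(t)](x)$. Applying the ``multiply-by-$x$'' identity from the proof of Theorem~\ref{decay} once more, now to $g(t)=t^n\psi(t)$, gives
\begin{equation*}
x^{n+1}\,\Hil\psi(x)=\Hil[t^{n+1}\psi(t)](x)+\frac{1}{\pi}\int t^n\psi(t)\,dt.
\end{equation*}
Bounding the first term by $\tfrac{2}{\pi}\norm{x^{n+1}\psi(\cdot)}_{1,\infty}$ via \eqref{bound1} and the second by $\tfrac{1}{\pi}\norm{x^n\psi(\cdot)}_1$ controls $\abs{x}^{n+1}\abs{\Hil\psi(x)}$; combining this with the uniform bound $\abs{\Hil\psi(x)}\le\tfrac{2}{\pi}\norm{\psi}_{1,\infty}$ used for $\abs{x}\le 1$ produces \eqref{bound4}.

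For the vanishing moments I would argue in the frequency domain. The decay \eqref{bound4} guarantees that $\abs{x}^k\Hil\psi(x)$ is integrable for $0\le k\le n-1$, so those moments converge absolutely and, equivalently, $\widehat{\Hil\psi}$ is $(n-1)$-times continuously differentiable with $\widehat{\Hil\psi}^{(k)}(0)=(-j)^k\int x^k\,\Hil\psi(x)\,dx$; it therefore suffices to show these derivatives vanish. Since $\norm{x^{n+1}\psi(\cdot)}_1<\infty$, the transform $\hat\psi$ is $n$-times continuously differentiable, and the vanishing moments of $\psi$ give $\hat\psi^{(k)}(0)=0$ for $0\le k<n$. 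From \eqref{freq-def} we have $\widehat{\Hil\psi}(\w)=-j\,\mathrm{sgn}(\w)\,\hat\psi(\w)$, which equals $\mp j\hat\psi(\w)$ on each half-line, so its one-sided $k$-th derivatives at the origin are $\mp j\hat\psi^{(k)}(0)=0$ for $k\le n-1$. Because $\widehat{\Hil\psi}\in C^{n-1}$, its two-sided derivative at $0$ coincides with this common one-sided value, whence $\widehat{\Hil\psi}^{(k)}(0)=0$ and $\Hil\psi$ has $n$ vanishing moments.

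The conceptual heart is the vanishing-moment subtraction of the second paragraph, but I expect the main technical obstacle to be bookkeeping: justifying that the auxiliary transforms $\Hil[t^n\psi]$ and $\Hil[t^{n+1}\psi]$ are well-defined requires the intermediate mixed norm $\norm{t^n\psi(\cdot)}_{1,\infty}$ to be finite, which must be extracted from the three hypothesized norms by elementary (if tedious) estimates---$\abs{t}^k\le 1+\abs{t}^{n+1}$ for the $L^1$ parts and the product rule for the derivative parts. The other delicate point is the discontinuity of the multiplier $\mathrm{sgn}(\w)$ at the origin: this is precisely why the regularity of $\widehat{\Hil\psi}$ at $0$---supplied by the decay established first---is indispensable for closing the moment argument, rather than naively differentiating the product.
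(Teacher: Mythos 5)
Your proposal is correct and follows essentially the same route as the paper: the same polynomial subtraction $P(t)=\sum_{k=0}^{n-1}t^k/x^{k+1}$ (your two-step split into $x^n\Hil\psi=\Hil[t^n\psi]$ followed by the multiply-by-$x$ identity recombines into the paper's single algebraic identity), the same bounds via Theorem \ref{decay} applied to $x^{n+1}\psi$, and the same Fourier-domain moment argument. Your one-sided derivative treatment of the $\mathrm{sgn}(\w)$ discontinuity is precisely the detail the paper compresses into ``it can then be verified,'' and your flagged bookkeeping worry about $\|t^n\psi(\cdot)\|_{1,\infty}$ is harmless, since the existence of $\Hil[t^n\psi](x)$ for $x\neq 0$ already follows from your own decomposition (it is the difference of convergent limits), so no extra norm estimate is actually needed.
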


	Before proceeding to the proof, we make some comments. Note that, under the assumptions on the vanishing moments, \eqref{bound4} holds true for compactly supported wavelets provided it is continuously differentiable. This in fact is the case for the cubic spline wavelet shown in Figure \ref{HilbertWavelets}. More generally, \eqref{bound4} holds if $\psi(x)$ is continuously differentiable, has $n$ vanishing moments, and satisfies the mild decay conditions  
\begin{equation*}
\psi(x)=O(1/\abs{x}^{n+3+\eps}), \quad  \psi'(x)=O(1/\abs{x}^{n+2+\eps'}) \qquad (\abs{x} \longrightarrow \infty)
\end{equation*}
where $\eps$ and $\eps'$ are arbitrarily small positive numbers. The significance of the above result is that by requiring $\psi(x)$ to have a large number of vanishing moments, we can effectively make $\Hil \psi(x)$ as localized as $\psi(x)$. This had been observed qualitatively early on in connection with the wavelet localization of the Radon transform \cite{Olson}. 

	Now, we show that \eqref{bound4} is sharp, by considering the special case of B-spline wavelets. It is known that if $\psi(x)$ is a B-spline wavelet of degree $n-1$, then $\Hil \psi(x)$ is again a (fractional) B-spline wavelet of the same degree, and hence has the same decay of $1/|x|^{n+1}$ \cite{Chaudhury2009}, \cite{FractionalSplines2000}. This exactly what is predicted by \eqref{bound4}, since $\psi(x)$ is known to have $n$ vanishing moments.

\begin{proof}[Proof of Theorem \ref{better-decay}]
It follows from Theorem \ref{decay} that $\Hil \psi(x)$ is well-defined, and that
\begin{equation}
\label{bound5}
| \Hil \psi(x) | \leq  \frac{1}{\pi} \left( 2 \lVert \psi' \lVert_{\infty} +  \lVert \psi \lVert_1 \right).
\end{equation}
As for the decay, fix any $x$ away from zero, and let 
 \begin{equation*}
P(t)=\frac{1}{x} + \frac{t}{x^2} + \cdots + \frac{t^{n-1}}{x^n}.
\end{equation*}
It is clear that
\begin{equation*}
\int P(t) \psi(t) \ dt=0.
\end{equation*}
Using this, we can write
\begin{align*}
\Hil \psi(x)& = \frac{1}{\pi} \lim_{\eps \rightarrow 0} \int_{|t-x| > \eps} \frac{\psi(t)}{x-t} \ dt  \\
& =\frac{1}{\pi} \lim_{\eps \rightarrow 0} \int_{|t-x| > \eps} \psi(t) \Big(\frac{1}{x-t} -P(t) \Big) \ dt.
\end{align*}
A simple computation shows that
\begin{equation*}
\frac{1}{x-t} -P(t)= \frac{1}{x^{n+1}}\Big(t^n+\frac{t^{n+1}}{x-t}\Big),
\end{equation*}
so that
\begin{align*}
\Hil \psi(x) &= \frac{1}{ \pi x^{n+1}} \left[ \int t^n \psi(t) \ dt + \lim_{\eps \rightarrow 0} \int_{|t|-x > \eps} \frac{t^{n+1} \psi(t)}{x-t} \ dt \right] \\
& = \frac{1}{x^{n+1}} \left[ \frac{1}{\pi} \int t^n \psi(t) \ dt + \Hil g(x) \right]
\end{align*}
where $g(x)=x^{n+1} \psi(x)$. Form Theorem \ref{decay} and the assumptions on $\psi(x)$, it follows that
\begin{equation*}
|x^{n+1} \Hil \psi(x)| \leq \frac{1}{\pi} \Big (\|x^n \psi(\cdot)\|_1 + 2\|g'\|_{\infty} +\|g\|_1 \Big).
\end{equation*}
Combining this with \eqref{bound5}, we obtain \eqref{bound4}.

	 As for the vanishing moments of $\Hil \psi(x)$, note that, since $\psi(x)$ has $n$ vanishing moments, 
\begin{equation*}
\int \lvert x^k \psi(x) \rvert \ dx < \infty \qquad (0 \leq k <n).
\end{equation*}
One can then verify, e.g., using the dominated convergence theorem, that $\hat{\psi}(\w)$ is $n$-times differentiable, and that
\begin{equation}
\label{equivalence}
\hat{\psi}^{(k)}(0)=(-j)^n  \int x^n \psi(x) \ dx \qquad (0 \leq k <n).
\end{equation}
Therefore,  $\hat{\psi}^{(k)}(0)=0$ for $0 \leq k < n$.

Now, since $\psi(x)$ is square-integrable\footnote{This follows from the fact that $f(x)$ is both integrable and bounded. The boundedness of $f(x)$ is a consequence of its uniform continuity, which in turn follows from the boundedness of $f'(x)$. Indeed, uniform continuity along with integrability implies that $f(x) \longrightarrow 0$ as $x \longrightarrow \infty$, and this along with continuity implies boundedness.}, \eqref{freq-def} holds. It  can then be verified that $\widehat{\Hil \psi}(\w)$ is $n$-times differentiable, and that $\widehat{\Hil \psi}^{(k)}(0)=0$ for $0 \leq k < n$. To arrive at the desired conclusion, we note that $|\Hil \psi(x)| \leq C/(1+|x|^{n+1})$, whereby
\begin{equation*}
\int \lvert x^k \Hil \psi(x) \rvert \ dx < \infty \qquad (0 \leq k <n).
\end{equation*}
This is sufficient to ensure that \eqref{equivalence} holds for $\Hil\psi(x)$, thus completing the proof.
\end{proof}

Note that the specialized form of this result is well-know for the particular case of $n=1$, that is, when the function is of zero mean. For example, along with the classical Cald\'eron-Zygmund decomposition (a wavelet-like decomposition), this is used to derive certain boundedness properties of the transform on the class of integrable functions \cite{Javier}. To the best of our knowledge, there is no explicit higher-order generalization of this result in the form of Theorem \ref{better-decay} in the harmonic analysis or signal processing literature.

\subsection{Smoothness}
		
	 We now investigate the smoothness of $\Hil \psi(x)$. The route we take capitalizes on the Fourier-domain specification of the transform, and the fact that the smoothness of a function is related to the decay of its Fourier transform. In general, the better the decay of the Fourier transform, the smoother is the function, and vice versa. We recall that a finite-energy signal $f(x)$ is said to belong to the Sobolev space $\mathbf{W}^{2,\gamma}(\R), \gamma \geq 0,$ if
\begin{equation*}
\int (1+|\w|^2)^\gamma |\hat f(\w)| ^2 d\w < \infty
\end{equation*}
The Sobolev embedding theorem asserts that every $f(x)$ belonging to $\mathbf{W}^{2,\gamma}(\R)$ can be identified (almost everywhere) with a function which is $n$-times continuously differentiable provided that $\gamma>n+1/2$; e.g., see \cite{WTSP}.  Since \eqref{freq-def} holds true for all finite-energy signals, we immediately conclude that
 
\begin{proposition}[Comparable smoothness]
\label{differentiability}
If $\psi(x)$ belongs to $\mathbf{W}^{2,\gamma}(\R)$, then $\Hil \psi(x)$ belongs $\mathbf{W}^{2,\gamma}(\R)$. In particular, if $\gamma>n+1/2$, then both $\psi(x)$ and $\Hil \psi(x)$ are $n$-times continuously differentiable (almost everywhere). 
\end{proposition}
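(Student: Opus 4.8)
The plan is to work entirely on the Fourier side and to exploit the structure of \eqref{freq-def}, which tells us that the Hilbert transform acts on $\hat\psi(\w)$ as multiplication by the multiplier $-j\,\mathrm{sgn}(\w)$. The single observation that drives everything is that this multiplier is \emph{unimodular}: it has modulus one for every $\w \neq 0$. Consequently the Hilbert transform leaves the pointwise modulus of the Fourier transform unchanged, and since the Sobolev membership condition involves only $|\hat\psi(\w)|$, it must be preserved.

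First I would record that the hypothesis $\psi \in \mathbf{W}^{2,\gamma}(\R)$ with $\gamma \geq 0$ already forces $\psi$ to have finite energy: because $1 \leq (1+|\w|^2)^\gamma$ when $\gamma \geq 0$, we get $\int |\hat\psi(\w)|^2\,d\w \leq \int (1+|\w|^2)^\gamma |\hat\psi(\w)|^2\,d\w < \infty$. This is precisely the setting in which \eqref{freq-def} is valid, so I am entitled to use the frequency-domain formula. Next I would extract from \eqref{freq-def} the identity $|\widehat{\Hil\psi}(\w)| = |\hat\psi(\w)|$ for all $\w \neq 0$, noting that the lone point $\w = 0$, where the transform vanishes, does not affect any integral. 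Substituting this into the defining integral of the Sobolev space then yields
\[
\int (1+|\w|^2)^\gamma \, |\widehat{\Hil\psi}(\w)|^2 \, d\w = \int (1+|\w|^2)^\gamma \, |\hat\psi(\w)|^2 \, d\w ,
\]
whose right-hand side is finite by assumption. This establishes that $\Hil\psi(x) \in \mathbf{W}^{2,\gamma}(\R)$, the first assertion of the proposition.

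The second assertion follows by invoking the Sobolev embedding theorem exactly as quoted in the text: when $\gamma > n + 1/2$, every element of $\mathbf{W}^{2,\gamma}(\R)$ coincides almost everywhere with an $n$-times continuously differentiable function. Applying this separately to $\psi(x)$ and to $\Hil\psi(x)$ gives the stated smoothness of both functions. I do not expect any genuine obstacle here, since the argument reduces to the modulus-preserving nature of the multiplier; the only point deserving a moment's care is the justification that \eqref{freq-def} is applicable, i.e.\ that $\psi(x)$ is square-integrable, which, as noted above, is automatic from the Sobolev hypothesis and so need not be assumed separately.
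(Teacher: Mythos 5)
Your proposal is correct and follows essentially the same route as the paper: the paper's proof is precisely the observation that \eqref{freq-def} holds for all finite-energy signals, so the unimodular multiplier $-j\,\mathrm{sgn}(\w)$ leaves $|\hat\psi(\w)|$ (and hence the Sobolev integral) unchanged, after which the Sobolev embedding theorem gives the differentiability. Your additional check that the hypothesis $\gamma \geq 0$ already guarantees square-integrability is a nice explicit justification of a step the paper leaves implicit.
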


	For example, the cubic spline wavelet belongs to $\mathbf{W}^{2,\gamma}(\R)$ for all $\gamma < 3+1/2$ \cite{FractionalSplines2000}. This explains the comparable smoothness of the wavelet and its transform shown in Figure \ref{HilbertWavelets}, which are both twice continuously differentiable. 

\section{Conclusion}

	It  has been known for quite some time that the Hilbert transform of a wavelet is again a wavelet. In this correspondence, we were concerned with the precise understanding of the sense in which this holds true. In particular, we formulated certain basic theorems concerning the localization, smoothness, and the number of vanishing moments of the Hilbert transform of a wavelet. Our main objective was to provide self-contained and straightforward proofs of these results along with some concrete examples.

%
%

     
\bibliographystyle{plain}
\bibliography{bibliography.bib}

\end{document}